\documentclass[a4paper,12pt]{amsart}
\usepackage{amsmath,amssymb}

\usepackage{booktabs}

\usepackage{amsmath,amsthm,amsfonts,amssymb,amscd}
\usepackage{fullpage}
\usepackage{lastpage}
\usepackage{enumerate}
\usepackage{fancyhdr}

\usepackage{mathrsfs}
\setlength{\parindent}{0.0in}
\setlength{\parskip}{0.05in}

\usepackage[unicode]{hyperref}
\usepackage{latexsym,epsfig}          

\usepackage{enumitem}
\usepackage{palatino}


\usepackage{tikz-cd}


\newtheorem{thm}{Theorem}

\newtheorem{remark}[thm]{Remark}

\newtheorem{prop}[thm]{Proposition}

\newtheorem{coro}[thm]{Corollary}


\def\dim{\operatorname{dim}}

\DeclareMathOperator{\Ext}{Ext}
\DeclareMathOperator{\Pic}{Pic}
\DeclareMathOperator{\Sym}{Sym}



\title{ Decomposition of the Tschirnhausen module for coverings on decomposable \texorpdfstring{$\mathbb{P}^1$}{P1}-bundles }

\author{Youngook Choi}
\address{Department of Mathematics Education, Yeungnam University, 280 Daehak-Ro, \hfill \newline\texttt{}
 \indent Gyeongsan, Gyeongbuk 38541, Republic of Korea}
\email{ychoi824@yu.ac.kr}

\author[H. Iliev]{Hristo Iliev}
\address{American University in Bulgaria, 2700 Blagoevgrad, Bulgaria, and \hfill \newline\texttt{}
 \indent Institute of Mathematics and Informatics, Bulgarian Academy of Sciences, \hfill \newline\texttt{}
 \indent 1113 Sofia, Bulgaria}
\email{ hiliev@aubg.edu, hki@math.bas.bg}

\author[S. Kim]{Seonja Kim}
\address{Department of Electronic Engineering, Chungwoon University, Sukgol-ro, Nam-gu, \hfill \newline\texttt{}
 \indent Incheon 22100, Republic of Korea}
\email{sjkim@chungwoon.ac.kr}

\thanks{The first author was supported by the National Research Foundation of Korea (NRF) grant funded by the Korea government(MSIT) (RS-2024-00352592).
The second author was supported by Grant KP-06-N 62/5 of Bulgarian National Science Fund.
The third  author was supported by the National Research Foundation of Korea (NRF) grant funded by the Korea government(MSIT) (2022R1A2C1005977).}

\textwidth 172mm \textheight 210mm \hoffset 0mm \voffset 0mm

\usepackage{setspace}
\setstretch{1.25}          


\usepackage{geometry}
\geometry{margin=0.85in}
\begin{document}

\begin{abstract}
In this note, we show that for a smooth algebraic variety $Y$ and a smooth $m$-section $X$ of the $\mathbb{P}^1$-bundle
\[
f : \mathbb{P}(\mathcal{O}_Y \oplus \mathcal{O}_Y(E)) \longrightarrow Y,
\]
where $E$ is an effective divisor on $Y$ satisfying $H^1(Y, \mathcal{O}_Y(kE)) = 0$ for all $k = 1, \ldots, m-1$, the Tschirnhausen module of the induced covering
$
 f|_X : X \longrightarrow Y
$
is completely decomposable. We then apply it to coverings of curves arising in such a way.
\end{abstract}

\maketitle

\medskip
\noindent
\textbf{Mathematics Subject Classification (2020):} 14E20, 14H30, 14J26, 14H50.

\medskip
\noindent
\textbf{Keywords:} Tschirnhausen module, coverings of algebraic varieties, coverings of curves.

\section{Introduction and statement}\label{Sec1}

Let $\varphi : X \to Y$ be a finite morphism of degree $m \geq 2$, where $X$ and $Y$ are smooth projective algebraic varieties of dimension $n \geq 1$. Such a covering induces the following short exact sequence of vector bundles on $Y$:
\begin{equation*}\label{Sec1_SES_Tschirn_Mod}
 0 \to \mathcal{O}_Y \xrightarrow{\varphi^{\sharp}} \varphi_{\ast} \mathcal{O}_X \to \mathcal{E}^{\vee} \to 0 \, ,
\end{equation*}
where $\mathcal{E}^{\vee}$ is known as the \emph{Tschirnhausen module} associated with the cover~$\varphi$. The theory of $m:1$  covers in algebraic geometry was developed by Miranda \cite{Mir85} in the case $m=3$, and by Casnati and Ekedahl in general, see \cite{CE96}. Partial results, examples, and applications have since been obtained by many others, including \cite{Kan13}, which described the resolution of $\mathcal{O}_X$ for $m=3,4,5$ in the case of curves, and \cite{DP22}, which showed that every vector bundle on a smooth projective curve arises, up to twist, as a Tschirnhausen module.

In this work, we focus on the situation where the covering arises from a decomposable $\mathbb{P}^1$-bundle over the base variety. The main result of the paper is the following theorem.

\begin{thm}\label{TheoremM}
Let $Y$ be a smooth projective variety of dimension $n \geq 1$, and let $E$ and $\Delta$ be effective divisors on $Y$ such that
\[
    H^1(Y, \mathcal{O}_Y(kE+\Delta)) = 0 \quad \text{ for } k = 1, 2, \ldots, m-1 ,
\]
where $m \geq 2$. Consider the projective bundle
$
    \mathcal{B} := \mathbb{P}(\mathcal{O}_Y \oplus \mathcal{O}_Y(E)),
$
following Hartshorne's convention, with projection
$
    f : \mathcal{B} \to Y.
$
Let $H$ denote the tautological divisor on $\mathcal{B}$, so that
\(\mathcal{O}_{\mathcal{B}}(H) \cong \mathcal{O}_{\mathcal{B}}(1)\),
and observe that
$
   f_{\ast} \mathcal{O}_{\mathcal{B}} (1)
   \;\cong\;
   \mathcal{O}_Y \oplus \mathcal{O}_Y (E) .
$
Suppose that
\[
    X \;\in\; \big| mH + f^{\ast}\Delta \big|
\]
is a smooth, irreducible divisor (an $m$-section of $f$), and let
$
    \varphi := f|_X : X \longrightarrow Y
$
be the restriction of the projection $f$ to $X$. Then:
\begin{enumerate}[label=(\alph*), leftmargin=*, font=\rmfamily]
 \item The pushforward of the structure sheaf of $X$ decomposes as
\[
    \varphi_{\ast} \mathcal{O}_X
    \;\cong\;
    \mathcal{O}_Y
    \;\oplus\;
    \mathcal{O}_Y (-E-\Delta)
    \;\oplus\;
    \cdots
    \;\oplus\;
    \mathcal{O}_Y \bigl(-(m-1)E-\Delta \bigr) .
\]
In particular, the \emph{Tschirnhausen module} $\mathcal{E}^{\vee}$ associated to the covering
$\varphi : X \to Y$ satisfies
\[
 \mathcal{E}^{\vee}
 \;\cong\;
 \left[
 \mathcal{O}_Y
 \;\oplus\;
 \mathcal{O}_Y (-E)
 \;\oplus\;
 \cdots
 \;\oplus\;
 \mathcal{O}_Y (-(m-2)E)
 \right]
 \otimes
 \mathcal{O}_Y (-E-\Delta) .
\]
 \item For the twisted sheaf $\mathcal{O}_X \bigl((H - f^{\ast}E)|_X \bigr)$, one has
\[
    \varphi_{\ast}
    \mathcal{O}_X \!\big( (H - f^{\ast}E)|_X \big)
    \;\cong\;
    \mathcal{O}_Y
    \;\oplus\;
    \mathcal{O}_Y (-E)
    \;\oplus\;
    \mathcal{O}_Y (-2E-\Delta)
    \;\oplus\;
    \cdots
    \;\oplus\;
    \mathcal{O}_Y \bigl( -(m-1)E-\Delta \bigr) .
\]
\end{enumerate}
\end{thm}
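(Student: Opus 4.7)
My overall strategy is to derive both decompositions from the ideal sheaf sequence of $X$ in $\mathcal{B}$ (suitably twisted), push it down via $f$, and then split the resulting extensions by invoking the $H^1$-vanishing hypothesis. A key ingredient throughout is the relative dualizing sheaf $\omega_{\mathcal{B}/Y} \cong \mathcal{O}_\mathcal{B}(-2H) \otimes f^*\mathcal{O}_Y(E)$, obtained from the relative Euler sequence on the $\mathbb{P}^1$-bundle $\mathbb{P}(\mathcal{O}_Y \oplus \mathcal{O}_Y(E))$ together with $\det(\mathcal{O}_Y \oplus \mathcal{O}_Y(E)) = \mathcal{O}_Y(E)$.

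For part~(a), I would apply $f_*$ to
\[
0 \to \mathcal{O}_\mathcal{B}(-mH - f^*\Delta) \to \mathcal{O}_\mathcal{B} \to \mathcal{O}_X \to 0.
\]
Three vanishings collapse the resulting long exact sequence: $R^1 f_*\mathcal{O}_\mathcal{B} = 0$ because $f$ is a $\mathbb{P}^1$-bundle, $R^1 f_*\mathcal{O}_X = R^1 \varphi_*\mathcal{O}_X = 0$ because $\varphi$ is finite, and $f_*\mathcal{O}_\mathcal{B}(-mH - f^*\Delta) = 0$ because the twist is negative on every fiber. What remains is
\[
0 \to \mathcal{O}_Y \to \varphi_*\mathcal{O}_X \to R^1 f_*\mathcal{O}_\mathcal{B}(-mH - f^*\Delta) \to 0.
\]
Combining relative Serre duality with the projection formula and the standard identification $f_*\mathcal{O}_\mathcal{B}(kH) \cong \Sym^k(\mathcal{O}_Y \oplus \mathcal{O}_Y(E)) = \bigoplus_{i=0}^k \mathcal{O}_Y(iE)$ for $k \geq 0$, I would compute the rightmost term as $\bigoplus_{j=1}^{m-1} \mathcal{O}_Y(-jE-\Delta)$. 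To conclude, the extension splits because
\[
\Ext^1\!\Bigl(\textstyle\bigoplus_{j=1}^{m-1} \mathcal{O}_Y(-jE-\Delta),\, \mathcal{O}_Y\Bigr) = \bigoplus_{j=1}^{m-1} H^1\!\bigl(Y, \mathcal{O}_Y(jE+\Delta)\bigr) = 0
\]
is precisely the hypothesis; this yields the decomposition of $\varphi_*\mathcal{O}_X$ and hence of $\mathcal{E}^\vee$.

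For part~(b), I would tensor the ideal sequence with $\mathcal{O}_\mathcal{B}(H - f^*E)$ and run the analogous argument. The projection formula yields $f_*\mathcal{O}_\mathcal{B}(H - f^*E) \cong \mathcal{O}_Y(-E) \oplus \mathcal{O}_Y$ and kills the corresponding $R^1$, while a parallel Serre-duality calculation identifies the higher direct image coming from the negative twist as a direct sum of line bundles $\mathcal{O}_Y(-jE-\Delta)$ in a shifted index range. The resulting extension again splits because the relevant $\Ext^1$ groups decompose into $H^1(Y, \mathcal{O}_Y(kE+\Delta))$ for $k$ in the range $\{1,\ldots,m-1\}$ covered by the hypothesis. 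The only genuinely delicate step in the whole argument is the bookkeeping in the relative Serre duality computation on the $\mathbb{P}^1$-bundle, and I expect this to be the main obstacle; once the higher direct image is pinned down, the appearance of the summands and the splitting of the extensions follow in a routine fashion.
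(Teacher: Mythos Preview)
Your proposal is correct and follows essentially the same route as the paper: the paper pushes down the (twisted) ideal sheaf sequence of $X$ in $\mathcal{B}$, computes the relevant higher direct images via the standard formula $R^1 f_*\mathcal{O}_\mathcal{B}(k) \cong (f_*\mathcal{O}_\mathcal{B}(-k-2))^\vee \otimes \det(\mathcal{O}_Y\oplus\mathcal{O}_Y(E))^\vee$ (which is exactly your relative Serre duality on the $\mathbb{P}^1$-bundle), and then splits the resulting extension using the $\Ext^1$-vanishing furnished by the hypothesis. The only cosmetic difference is that the paper packages the direct-image computations by citing Hartshorne's Exercise~III.8.4 rather than invoking relative duality and the Euler sequence directly.
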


We discovered this result while studying the construction of components of the Hilbert scheme of curves via $m:1$ covers. The case $m = 3$ with $\Delta = 0$, was previously obtained by Fujita in \cite{Fuj1988}. In our earlier work \cite{CIK24b}, we considered the case $m = 3$, $\dim Y = 1$, and $\Delta$ equal to a point, motivated by the need to determine the dimension of the space of first-order deformations of $X$. Subsequently, we realized that the decomposition extends to the more general setting described in \textnormal{Theorem}~\ref{TheoremM}. This result can be applied in the construction of components of the Hilbert scheme of curves, as detailed in \textnormal{Theorem}~$A_m$ and \textnormal{Theorem}~$B_m$ of \cite{CIK25}.

The proof of \textnormal{Theorem~\ref{TheoremM}} is given in \textnormal{Section~\ref{Sec2}}, while in \textnormal{Section~\ref{Sec3}} we derive several consequences, some of which have applications to the study of the Hilbert scheme of curves, as can be seen, for instance, in \cite{CIK25}.

\subsection*{Conventions and notation}

Throughout the paper we work over the field $\mathbb{C}$. By \emph{smooth projective variety} we mean a smooth, integral, projective scheme of finite type over $\mathbb{C}$. For a vector bundle $\mathcal{V}$ on a variety $Y$, we adopt Hartshorne's convention for the projectivization $\mathbb{P} (\mathcal{V})$. For additional definitions and results not introduced explicitly in the paper, we refer the reader to \cite{Hart77}.

\medskip

\section{Proof of Theorem~\ref{TheoremM}}\label{Sec2}

The main tool in the proof is the following proposition.

\begin{prop}\label{PropDerivedDecomp}
Let $Y$ be a smooth projective variety of dimension $n$, and let $\mathcal{E}$ be a vector bundle of rank $r+1$ on $Y$. Consider the projective bundle
\[
 \mathcal{B} := \mathbb{P}(\mathcal{E})
\]
in the sense of Hartshorne's convention, so that $\mathcal{B}$ is a smooth variety of dimension $n + r$, and the fibers of the projection
\[
 f : \mathcal{B} \to Y
\]
are projective spaces of dimension $r$. Let $H$ denote the tautological divisor on $\mathcal{B}$, i.e., $\mathcal{O}_{\mathcal{B}} (H) \cong \mathcal{O}_{\mathcal{B}} (1)$, with $f_{\ast} \mathcal{O}_{\mathcal{B}} (H) \cong \mathcal{E}$. Then:

\begin{enumerate}[label=(\alph*), leftmargin=*]
  \item The Picard group of $\mathcal{B}$ is given by
  \[
  \operatorname{\Pic}(\mathcal{B}) \cong \mathbb{Z}[\mathcal{O}_{\mathcal{B}} (H)] \oplus f^{\ast} \operatorname{\Pic}(Y).
  \]

  \item The higher direct images of $\mathcal{O}_{\mathcal{B}} (k) := \mathcal{O}_{\mathcal{B}} (kH)$ satisfy:
  \begin{enumerate}[label=(\roman*)]
    \item For all $k \in \mathbb{Z}$,
    \[
    f_{\ast} \mathcal{O}_{\mathcal{B}} (k) \cong
    \begin{cases}
      \operatorname{\Sym}^k \mathcal{E} & \text{ if } k \geq 0, \\
      0 & \text{if } k < 0.
    \end{cases}
    \]

    \item For all $k \in \mathbb{Z}$ and $0 < i < r$,
    \[
    R^i f_{\ast} \mathcal{O}_{\mathcal{B}} (k) = 0.
    \]

    \item For $i = r$,
    \[
    R^r f_{\ast} \mathcal{O}_{\mathcal{B}} (k) \cong
    \begin{cases}
      0 & \text{if } k > -r - 1, \\
      \bigl(f_{\ast} \mathcal{O}_{\mathcal{B}} (-k - r - 1)\bigr)^{\vee} \otimes \det \mathcal{E}^{\vee} & \text{ if } k \leq -r - 1.
    \end{cases}
    \]
  \end{enumerate}
\end{enumerate}
\end{prop}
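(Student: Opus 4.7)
The plan is to treat the three parts separately, viewing the whole proposition as a package of standard facts about Hartshorne-style projective bundles that I would reassemble here for the reader's convenience.

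For part (a), I would argue that $\mathcal{B}$ is Zariski-locally on $Y$ isomorphic to $\mathbb{P}^r\times U$, so that one has a short exact sequence
\[
0\longrightarrow f^{\ast}\operatorname{Pic}(Y)\longrightarrow \operatorname{Pic}(\mathcal{B})\longrightarrow \mathbb{Z}\longrightarrow 0,
\]
where the right-hand map sends a class to its degree on a fiber. The class of $H$ restricts to a generator of $\operatorname{Pic}(\mathbb{P}^r)$, so it splits the surjection. Alternatively, one invokes the Leray spectral sequence with $R^1f_{\ast}\mathcal{O}_{\mathcal{B}}^{\ast}=\underline{\mathbb{Z}}$. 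I would prefer the first, more concrete, argument.

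For part (b)(i) and (b)(ii), the key point is that $f$ is flat with fibers $\mathbb{P}^r$, so I would apply cohomology and base change. Since for each point $y\in Y$ the cohomology $H^i(\mathbb{P}^r,\mathcal{O}(k))$ is either zero or has constant dimension in a Zariski neighborhood, Grauert's theorem ensures that the formation of $R^if_{\ast}\mathcal{O}_{\mathcal{B}}(k)$ commutes with base change. In particular, $R^if_{\ast}\mathcal{O}_{\mathcal{B}}(k)=0$ for $0<i<r$ and for $i=0,k<0$. For the positive range $k\geq 0$, the assertion $f_{\ast}\mathcal{O}_{\mathcal{B}}(k)\cong \operatorname{Sym}^k\mathcal{E}$ is essentially the construction of $\mathbb{P}(\mathcal{E})=\operatorname{Proj}(\operatorname{Sym}\mathcal{E})$; I would verify it locally on a trivializing open of $\mathcal{E}$ and then glue, using the canonical surjection $f^{\ast}\mathcal{E}\twoheadrightarrow \mathcal{O}_{\mathcal{B}}(1)$ to fix compatibilities.

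For part (b)(iii), I would appeal to relative Serre duality: since $f$ is smooth and proper of relative dimension $r$, there is a natural isomorphism
\[
R^rf_{\ast}\mathcal{O}_{\mathcal{B}}(k)\;\cong\;\bigl(f_{\ast}(\mathcal{O}_{\mathcal{B}}(-k)\otimes \omega_{\mathcal{B}/Y})\bigr)^{\vee}.
\]
To identify the relative dualizing sheaf, I would dualize the relative Euler sequence
\[
0\longrightarrow \Omega_{\mathcal{B}/Y}\longrightarrow f^{\ast}\mathcal{E}\otimes\mathcal{O}_{\mathcal{B}}(-1)\longrightarrow \mathcal{O}_{\mathcal{B}}\longrightarrow 0
\]
and take determinants, obtaining $\omega_{\mathcal{B}/Y}\cong \mathcal{O}_{\mathcal{B}}(-r-1)\otimes f^{\ast}\!\det\mathcal{E}$. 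Substituting back and applying the projection formula together with part (b)(i) yields the stated formula, with the case distinction at $k=-r-1$ emerging from the vanishing range of $f_{\ast}\mathcal{O}_{\mathcal{B}}(-k-r-1)$.

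The only real bookkeeping obstacle is keeping Hartshorne's convention consistent throughout; in the dual (Grothendieck) convention the formulas would read with $\mathcal{E}^{\vee}$ and $\det\mathcal{E}$ in swapped positions, so I would state explicitly which Euler sequence I am using and verify the sign on $\det\mathcal{E}$ once, then let the computation propagate.
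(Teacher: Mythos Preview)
Your proof sketch is correct and essentially complete; each of the three ingredients (local triviality for the Picard group, Grauert/base change for the vanishing and the identification $f_{\ast}\mathcal{O}_{\mathcal{B}}(k)\cong\Sym^k\mathcal{E}$, and relative Serre duality together with the relative Euler sequence for the top direct image) is standard and assembled in the right order, and your computation of $\omega_{\mathcal{B}/Y}\cong\mathcal{O}_{\mathcal{B}}(-r-1)\otimes f^{\ast}\det\mathcal{E}$ is consistent with Hartshorne's convention and gives exactly the stated formula after dualizing.

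The paper, by contrast, does not prove the proposition at all: its entire proof is a one-line reference to \cite[Exercise~III.8.4, p.~252]{Hart77}. So the difference is not one of method but of level of detail: the authors treat the result as a black box from the literature, whereas you have unpacked that black box into its constituent arguments. Your version is self-contained and would be appropriate in an expository setting or if the surrounding text needed any of the intermediate identifications (e.g., the explicit form of $\omega_{\mathcal{B}/Y}$); the paper's version is appropriate because the proposition is only used as input and the details play no further role.
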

\begin{proof}
See \cite[Exercise~8.4, p.~252]{Hart77}.
\end{proof}

We also recall the \emph{projection formula} \cite[Exercise~8.3, p.~252]{Hart77}, which will be applied repeatedly throughout the proof, without explicit reference. Specifically, if $f : M \to N$ is a morphism of ringed spaces, $\mathcal{F}$ is an $\mathcal{O}_M$-module, and $\mathcal{V}$ is a locally free $\mathcal{O}_N$-module of finite rank, then
\[
 R^i f_{\ast} (\mathcal{F} \otimes f^{\ast} \mathcal{V})
 \;\cong\;
 R^i f_{\ast}(\mathcal{F}) \otimes \mathcal{V}.
\]

We are now ready to proceed with the proof of the theorem.

\begin{proof}[Proof of Theorem \ref{TheoremM}]

First, we give the proof of \textnormal{(a)}.

Let $X \in |mH+f^{\ast}\Delta|$ be a smooth variety. Apply $f_{\ast}$ to the exact sequence
\[
 0 \to \mathcal{O}_{\mathcal{B}} (-mH - f^{\ast}\Delta)  \to \mathcal{O}_{\mathcal{B}} \to \mathcal{O}_X \to 0 .
\]
Since $m \geq 1$ and $\Delta$ is effective, it follows from \textnormal{Proposition \ref{PropDerivedDecomp}} that
\[
 f_{\ast} \mathcal{O}_{\mathcal{B}} (-mH-f^{\ast}\Delta) = 0, \quad f_{\ast} \mathcal{O}_{\mathcal{B}} = \mathcal{O}_Y, \quad \mbox{ and } \quad R^1 f_{\ast} \mathcal{O}_{\mathcal{B}} = 0 .
\]
Thus, we obtain the exact sequence
\begin{equation}\label{ExSeq1}
 0 \to \mathcal{O}_Y \to \varphi_{\ast} \mathcal{O}_X \to R^1 f_{\ast} \mathcal{O}_{\mathcal{B}} (-mH-f^{\ast}\Delta) \to 0 .
\end{equation}
Applying \textnormal{Proposition \ref{PropDerivedDecomp}} again, we find
\[
\begin{aligned}
 R^1 f_{\ast} \mathcal{O}_{\mathcal{B}} (-mH-f^{\ast}\Delta)
 & \cong R^1 f_{\ast} \mathcal{O}_{\mathcal{B}} (-mH) \otimes \mathcal{O}_Y (-\Delta) \\
 & \cong \bigl(f_{\ast} \mathcal{O}_{\mathcal{B}} (m - 2)\bigr)^{\vee} \otimes \det \bigl(\mathcal{O}_Y \oplus \mathcal{O}_Y (E)\bigr)^{\vee} \otimes \mathcal{O}_Y (-\Delta) \\
 & \cong \bigl(\Sym^{m-2} (\mathcal{O}_Y \oplus \mathcal{O}_Y (E))\bigr)^{\vee} \otimes \mathcal{O}_Y (-E-\Delta) \\
 & \cong \mathcal{O}_Y (-E-\Delta) \oplus \cdots \oplus \mathcal{O}_Y \bigl(-(m-1)E-\Delta\bigr) \, .
\end{aligned}
\]
Since, by assumption,
\[
 h^1 \bigl(Y, \mathcal{O}_Y (k E+\Delta)\bigr) = 0 \quad \mbox{ for } \quad k = 1, \ldots , m-1,
\]
we have
\[
 \Ext^1 \bigl(R^1 f_{\ast} \mathcal{O}_{\mathcal{B}} (-mH-f^{\ast}\Delta), \mathcal{O}_Y\bigr) \cong H^1 \bigl(Y, \mathcal{O}_Y (E+\Delta) \oplus \cdots \oplus \mathcal{O}_Y ((m-1)E+\Delta)\bigr) = 0 .
\]
Therefore, the exact sequence~\eqref{ExSeq1} splits, and we conclude that
\[
 \varphi_{\ast} \mathcal{O}_X \cong \mathcal{O}_Y \oplus \mathcal{O}_Y (-E-\Delta) \oplus \cdots \oplus \mathcal{O}_Y \bigl(-(m-1)E-\Delta \bigr) .
\]

The proof of \textnormal{(b)} proceeds in a similar way.

Consider the exact sequence
\[
 0 \to \mathcal{O}_{\mathcal{B}} \bigl(-(m-1)H -f^{\ast}(E+\Delta)\bigr)  \to \mathcal{O}_{\mathcal{B}} (H-f^{\ast}E) \to \mathcal{O}_X \bigl((H-f^{\ast}E)_{|_X}\bigr) \to 0 ,
\]
apply $f_{\ast}$. Since $m \geq 2$,  \textnormal{Proposition \ref{PropDerivedDecomp}} yields:
\begin{itemize}[label=$\circ$, leftmargin=1cm, font=\rmfamily]
 \item $f_{\ast} \mathcal{O}_{\mathcal{B}} \bigl(-(m-1)H -f^{\ast}(E+\Delta)\bigr) \cong f_{\ast} \mathcal{O}_{\mathcal{B}} \bigl(-(m-1)H\bigr) \otimes \mathcal{O}_Y (-E-\Delta) = 0$,
 \item $f_{\ast} \mathcal{O}_{\mathcal{B}} (H-f^{\ast}E) \cong f_{\ast} \mathcal{O}_{\mathcal{B}} (H) \otimes \mathcal{O}_Y (-E) \cong \mathcal{O}_Y (-E) \oplus \mathcal{O}_Y$, and
 \item $R^1 f_{\ast} \mathcal{O}_{\mathcal{B}} (H-f^{\ast}E) \cong R^1 f_{\ast} \mathcal{O}_{\mathcal{B}} (1) \otimes \mathcal{O}_Y (-E) = 0$.
\end{itemize}
Thus, we obtain the exact sequence
\begin{equation}\label{ExSeq2}
 0 \to \mathcal{O}_Y (-E) \oplus \mathcal{O}_Y \to \varphi_{\ast} \mathcal{O}_X \bigl((H-f^{\ast}E)_{|_X} \bigr) \to R^1 f_{\ast} \mathcal{O}_{\mathcal{B}} \bigl(-(m-1)H -f^{\ast}(E+\Delta)\bigr) \to 0 .
\end{equation}
For the term  $R^1 f_{\ast} \mathcal{O}_{\mathcal{B}} \bigl( -(m-1)H -f^{\ast}(E+\Delta) \bigr)$, we find:
\[
 \begin{aligned}
  R^1 f_{\ast} \mathcal{O}_{\mathcal{B}} \bigl(-(m-1)H -f^{\ast}(E+\Delta) \bigr) & \cong R^1 f_{\ast} \mathcal{O}_{\mathcal{B}} \bigl(-(m-1)H \bigr) \otimes \mathcal{O}_Y (-(E+\Delta)) \\
  & \cong \bigl( \Sym^{m-3} (\mathcal{O}_Y \oplus \mathcal{O}_Y (E)) \bigr)^{\vee} \otimes \mathcal{O}_Y (-(2E+\Delta)) \\
  & \cong \mathcal{O}_Y (-2E-\Delta) \oplus \cdots \oplus \mathcal{O}_Y (-(m-1)E-\Delta) .
 \end{aligned}
\]
Computing $\Ext^1 \bigl( R^1 f_{\ast} \mathcal{O}_{\mathcal{B}} (-(m-1)H -f^{\ast}(E+\Delta)), \mathcal{O}_Y (-E) \oplus \mathcal{O}_Y \bigr)$, we obtain
\[
 \begin{aligned}
  \Ext^1 & \bigl( R^1 f_{\ast} \mathcal{O}_{\mathcal{B}} (-(m-1)H -f^{\ast}(E+\Delta)), \mathcal{O}_Y (-E) \oplus \mathcal{O}_Y \bigr) \\
  & \cong H^1 \bigl( Y, \left[ \mathcal{O}_Y (2E+\Delta) \oplus \cdots \oplus \mathcal{O}_Y ((m-1)E+\Delta) \right] \otimes \left[ \mathcal{O}_Y (-E) \oplus \mathcal{O}_Y \right] \bigr) \\
  & \cong H^1 \bigl( Y, \left[\mathcal{O}_Y (E) \oplus \oplus^2_1 \mathcal{O}_Y (2E) \oplus \cdots \oplus \oplus^2_1 \mathcal{O}_Y ((m-2)E) \oplus \mathcal{O}_Y \bigl( (m-1)E \bigr) \right]\otimes \mathcal{O}_Y (\Delta) \bigr) \\
  & = 0 ,
 \end{aligned}
\]
where the vanishing follows from the assumptions of the theorem. Therefore, the exact sequence~\eqref{ExSeq2} splits, yielding the desired decomposition.

This completes the proof of the theorem.
\end{proof}

\medskip

\section{The case of curves}\label{Sec3}

In this section, we specialize to the case where $Y$ is a smooth projective curve and $E$ is an effective, nonspecial divisor of degree $e$ on $Y$. In this situation, the bundle
\[
 \mathcal{B} = \mathbb{P} \bigl(\mathcal{O}_Y \oplus \mathcal{O}_Y(E) \bigr)
\]
is a decomposable ruled surface.
Before deriving the corollaries of \textnormal{Theorem~\ref{TheoremM}} in this setting, we recall some classical facts about decomposable ruled surfaces.

The surface $\mathcal{B}$ contains a section $\sigma_0$ corresponding to
\[
 \sigma_0 : \quad
 \mathcal{O}_Y \oplus \mathcal{O}_Y (E)
 \twoheadrightarrow \mathcal{O}_Y .
\]
Let $Y_0 := \sigma_0(Y)$. Then $Y_0$ is the \emph{section of minimal self-intersection}, satisfying $Y_0^2 = -e$.
The surface $\mathcal{B}$ also contains another section $\sigma_1$, associated with the surjection
\[
 \sigma_1 : \quad
 \mathcal{O}_Y \oplus \mathcal{O}_Y (E) \twoheadrightarrow \mathcal{O}_Y (E) .
\]
If we denote $Y_1 := \sigma_1(Y)$, then
\[
 Y_1 \sim Y_0 + f^{\ast}E \sim H ,
\]
where $H$ is the tautological divisor as in the general case. The intersection numbers are given by
\[
 Y_0 \cdot H = 0,
 \qquad
 H^2 = e .
\]

\begin{prop}\label{Sec3_Prop_Morph}
Suppose that $Y$ is a smooth projective curve of genus $\gamma$, $E$ is a very ample nonspecial divisor of degree $e$, and $\mathcal{B}$, $Y_0$, $Y_1$, and $H$ are as above.
\begin{enumerate}[label=(\roman*), leftmargin=*, font=\rmfamily]
 \item The linear series $|\mathcal{O}_{\mathcal{B}}(H)|$ is base-point-free and defines a morphism
 \[
   \Psi := \Psi_{|\mathcal{O}_{\mathcal{B}}(H)|} : \mathcal{B} \longrightarrow \mathbb{P}^R ,
 \]
 where $R = e - \gamma + 1$.

 \item The morphism $\Psi$ is an isomorphism away from $Y_0$, which it contracts to a point.

 \item Geometrically, the image $F := \Psi(\mathcal{B})$ is a cone in $\mathbb{P}^R$ over a smooth curve $Y_e \cong Y$ of degree $e$, embedded in $\mathbb{P}^{R-1}$. The lines in the ruling of $F$ are the images of the fibers $f^{-1}(q)$ for $q \in Y$.
\end{enumerate}
\end{prop}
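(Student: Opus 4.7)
The plan is to combine the cohomology computation supplied by \textnormal{Proposition~\ref{PropDerivedDecomp}} with an explicit description of the divisors in $|H|$, and then recognize the image of $\Psi$ as a projective cone over $Y_e$.

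For \textnormal{(i)}, the isomorphism $f_{\ast}\mathcal{O}_{\mathcal{B}}(H) \cong \mathcal{O}_Y \oplus \mathcal{O}_Y(E)$ together with $R^1 f_{\ast}\mathcal{O}_{\mathcal{B}}(H) = 0$ yields
\[
 H^0(\mathcal{B}, \mathcal{O}_{\mathcal{B}}(H)) \cong H^0(Y, \mathcal{O}_Y) \oplus H^0(Y, \mathcal{O}_Y(E)).
\]
Since $E$ is nonspecial, Riemann--Roch gives $h^0(Y, \mathcal{O}_Y(E)) = e - \gamma + 1$, hence $h^0(\mathcal{B}, H) = e - \gamma + 2$ and $R = e - \gamma + 1$. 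Base-point-freeness comes from the natural surjection $f^{\ast}(\mathcal{O}_Y \oplus \mathcal{O}_Y(E)) \twoheadrightarrow \mathcal{O}_{\mathcal{B}}(H)$ together with global generation of both summands on $Y$ (the second because $E$ is very ample).

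The key step for \textnormal{(ii)} and \textnormal{(iii)} is to identify the zero divisors of the two families of sections in the above decomposition. A local computation in Hartshorne's convention (where points of $\mathcal{B}$ correspond to one-dimensional quotients of the fibers of $\mathcal{O}_Y \oplus \mathcal{O}_Y(E)$) shows that the section $(1, 0) \in H^0(\mathcal{O}_Y) \hookrightarrow H^0(\mathcal{B}, H)$ has zero divisor $Y_1$, while every section $(0, b)$ with $b \in H^0(Y, \mathcal{O}_Y(E))$ has zero divisor $Y_0 + f^{\ast}\mathrm{div}(b)$. Three consequences follow. First, all sections of the form $(0, b)$ vanish on $Y_0$, so the image $\Psi(Y_0)$ lies in the intersection of the $R$ independent hyperplanes they cut out in $\mathbb{P}^R$; this intersection is a single point $p_0$, giving the contraction in \textnormal{(ii)}. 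Second, the section $(1, 0)$ vanishes on $Y_1$, so $\Psi(Y_1)$ lies in the hyperplane $\mathbb{P}^{R-1} \subset \mathbb{P}^R$ complementary to $p_0$, and the restriction of $|H|$ to $Y_1$ is identified with the complete linear system $|\mathcal{O}_Y(E)|$; hence $\Psi|_{Y_1}$ embeds $Y$ as the smooth nondegenerate curve $Y_e \subset \mathbb{P}^{R-1}$ of degree $e$. Third, on each fiber $f^{-1}(q) \cong \mathbb{P}^1$ the system $|H|$ restricts to the complete $|\mathcal{O}_{\mathbb{P}^1}(1)|$, so $\Psi|_{f^{-1}(q)}$ is an isomorphism onto the line through $p_0$ and $\Psi(Y_1 \cap f^{-1}(q)) \in Y_e$. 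Consequently $F = \Psi(\mathcal{B})$ is the union of these lines, i.e., the projective cone over $Y_e$ with apex $p_0$, which proves the geometric content of \textnormal{(iii)}.

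For the remaining assertion in \textnormal{(ii)}, injectivity of $\Psi$ on $\mathcal{B} \setminus Y_0$ is short: if two distinct fibers were sent to lines through $p_0$ sharing a second common point, the two lines would coincide, but a line through $p_0$ meets $Y_e \subset \mathbb{P}^{R-1}$ in exactly one point, so the embeddedness of $Y_e$ via $|E|$ forces the fibers to coincide, and fiber-injectivity then forces the points themselves to coincide. To upgrade injectivity to a genuine isomorphism off $Y_0$, I would invoke Zariski's main theorem: $\Psi|_{\mathcal{B} \setminus Y_0} : \mathcal{B} \setminus Y_0 \to F \setminus \{p_0\}$ is a proper bijective morphism from a smooth (hence normal) source onto its image, and the image is normal because the smooth locus of a projective cone over a smoothly embedded curve is precisely the cone minus its apex. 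The main obstacle is exactly this last step, which depends on the smoothness of $F \setminus \{p_0\}$; should this appear delicate, one may instead verify directly that $d\Psi$ is injective on $\mathcal{B} \setminus Y_0$ by combining the (already established) fiber-direction injectivity with horizontal injectivity checked along a local section through the given point.
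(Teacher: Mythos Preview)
Your argument is correct and complete. The paper does not actually give a proof of this proposition: it simply cites \cite[Ex.~V.2.11]{Hart77}, \cite[Proposition~23]{GP2005}, and \cite[Lemma~2.2]{CIK25}, leaving the details to those references. What you have written is essentially a self-contained unpacking of what those references contain, carried out directly from the pushforward identity $f_{\ast}\mathcal{O}_{\mathcal{B}}(H)\cong\mathcal{O}_Y\oplus\mathcal{O}_Y(E)$ and an explicit bookkeeping of the zero-divisors of the two types of global sections. Your identification of the divisors of $(1,0)$ and $(0,b)$ is consistent with the paper's conventions for $Y_0$ and $Y_1$, and the cone description follows exactly as you say.

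Two small remarks on your part \textnormal{(ii)}. First, for the Zariski main theorem step you should make explicit that $\Psi^{-1}(p_0)=Y_0$ set-theoretically (which follows from base-point-freeness of $|E|$), so that the restriction $\Psi|_{\mathcal{B}\setminus Y_0}\to F\setminus\{p_0\}$ is genuinely a base change of the proper map $\Psi$ and hence itself proper; then ``proper $+$ quasi-finite $\Rightarrow$ finite'' and ``finite birational onto normal $\Rightarrow$ isomorphism'' finish the job. Second, your worry about smoothness of $F\setminus\{p_0\}$ is unfounded: away from the apex a projective cone over a smooth projectively embedded curve is locally a product of the curve with $\mathbb{A}^1$, hence smooth; your backup differential argument is unnecessary (though also valid).
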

\begin{proof}
The proof is obtained easily using  \cite[Ex.~V.2.11, p.385]{Hart77} and  \cite[Proposition~23]{GP2005}. For additional details the reader can refer to \cite[Lemma 2.2]{CIK25}.
\end{proof}

\medskip

In the above setting, let $P$ denote the vertex of the cone $F \subset \mathbb{P}^R$. We also note that in \textnormal{Corollary \ref{Coro1}} and \textnormal{Corollary \ref{Coro2}} that follow, it is assumed that the divisor $E$ on the curve $Y$ of genus $\gamma$ is nonspecial and very ample.

\begin{coro}\label{Coro1}
Let $X \subset \mathcal{B}$ be an irreducible smooth curve in the linear series $|mH|$, and let $X_{me} := \Psi(X) \subset F$ be the image of $X$ under $\Psi$. Then:
\begin{enumerate}[label=(\alph*), leftmargin=*, font=\rmfamily]
 \item $X_{me}$ is a smooth curve of degree $me$ and genus
 \[
   g = \binom{m}{2}e + m\gamma + 1 - m ,
 \]
 cut from $F$ by a degree $m$ hypersurface in $\mathbb{P}^{R}$;

 \item if $\phi : X_{me} \to Y_e$ is the morphism induced by projection from $P$ to the hyperplane in $\mathbb{P}^R$ containing $Y_e$, then
 \begin{equation}\label{Sec3Coro1eq}
  \phi_{\ast}\mathcal{O}_{X_{me}} \;\cong\;
  \mathcal{O}_{Y_e} \oplus \mathcal{O}_{Y_e}(-1) \oplus \cdots \oplus \mathcal{O}_{Y_e}\bigl(-(m-1)\bigr).
 \end{equation}
\end{enumerate}
\end{coro}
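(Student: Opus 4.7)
The plan is to reduce everything to intersection theory on the ruled surface $\mathcal{B}$ and to Theorem~\ref{TheoremM}(a), using the fact that $\Psi$ is an isomorphism away from the minimal section $Y_0$. The key preliminary observation is that $X \cdot Y_0 = mH \cdot Y_0 = 0$, and since $X$ is irreducible with $X \sim mH$ (which meets a fiber in $m$ points while $Y_0$ meets it in one), $X$ cannot coincide with $Y_0$. Hence $X \cap Y_0 = \emptyset$, $X$ lies entirely in the locus where $\Psi$ is an isomorphism, and $X_{me} := \Psi(X) \cong X$ is smooth. For the numerics in part~(a), the degree follows directly from $\deg X_{me} = X \cdot H = m H^2 = m e$. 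For the genus I would apply adjunction with the canonical bundle formula $K_{\mathcal{B}} \sim -2H + f^{\ast}(K_Y + E)$; a short computation yields $X \cdot K_{\mathcal{B}} = m(2\gamma - 2 - e)$ and $X^2 = m^2 e$, whence $2g - 2 = m^2 e + m(2\gamma - 2 - e)$ rearranges to $g = \binom{m}{2}e + m\gamma + 1 - m$. The assertion that $X_{me}$ is cut out from $F$ by a hypersurface of degree $m$ in $\mathbb{P}^R$ rests on the identifications $\Psi_{\ast}\mathcal{O}_{\mathcal{B}} \cong \mathcal{O}_F$ and $\Psi^{\ast}\mathcal{O}_F(1) \cong \mathcal{O}_{\mathcal{B}}(H)$, together with the projective normality of the cone $F$ inherited from that of $Y_e \subset \mathbb{P}^{R-1}$.

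For part~(b) my plan is to invoke Theorem~\ref{TheoremM}(a) with $\Delta = 0$. Since $E$ is effective and nonspecial, $h^0(K_Y - kE) \leq h^0(K_Y - E) = 0$ for every $k \geq 1$, so each $kE$ is again nonspecial, i.e.\ $H^1(Y, \mathcal{O}_Y(kE)) = 0$ for $k = 1, \ldots, m - 1$, and the theorem applies to give
\[
 \varphi_{\ast}\mathcal{O}_X \;\cong\; \mathcal{O}_Y \oplus \mathcal{O}_Y(-E) \oplus \cdots \oplus \mathcal{O}_Y(-(m-1)E).
\]
I would then transport this decomposition along the commutative square
\[
\begin{tikzcd}
X \arrow[r, "\sim"] \arrow[d, "\varphi"'] & X_{me} \arrow[d, "\phi"] \\
Y \arrow[r, "\sim"'] & Y_e
\end{tikzcd}
\]
in which the top horizontal isomorphism is $\Psi|_X$ and the bottom is the embedding of $Y$ into $\mathbb{P}^{R-1}$ by the complete linear series $|E|$; tautologically $\mathcal{O}_{Y_e}(1) \cong \mathcal{O}_Y(E)$ under this identification, so $\mathcal{O}_{Y_e}(-k) \cong \mathcal{O}_Y(-kE)$ for every $k$, and the displayed decomposition becomes precisely~\eqref{Sec3Coro1eq}.

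The main obstacle I anticipate is the verification that the square above actually commutes, i.e.\ that the projection $\phi$ from the vertex $P$ really does agree with $\varphi$ after transport through the two isomorphisms. This reduces to checking that for each $x \in X$, the line in $\mathbb{P}^R$ joining $P$ to $\Psi(x)$ is the image under $\Psi$ of the fiber $f^{-1}(f(x))$, which follows because this fiber meets $Y_0$ in a single point and is consequently sent by $\Psi$ to a ruling line of the cone $F$ passing through $P$. Once this geometric commutativity is in place, the numerics of~(a) and the decomposition in~(b) follow as above.
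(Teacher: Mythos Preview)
Your proposal is correct and follows essentially the same route as the paper: both argue that $X\cdot Y_0=0$ forces $X$ to miss $Y_0$ so that $\Psi|_X$ is an isomorphism, compute degree and genus by intersection theory and adjunction on $\mathcal{B}$, and then obtain~\eqref{Sec3Coro1eq} by applying Theorem~\ref{TheoremM}(a) with $\Delta=0$ and transporting along the commutative square identifying $\varphi$ with $\phi$. Your write-up is in fact more explicit than the paper's in several places---you spell out the canonical bundle computation, you verify the vanishing hypothesis $H^1(Y,\mathcal{O}_Y(kE))=0$ from nonspeciality of $E$, and you justify the commutativity of the square via the ruling lines of the cone---whereas the paper leaves these as one-line remarks.
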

\begin{proof}
Since $H \cdot Y_0 = 0$, the morphism $\Psi$ maps each smooth curve in the linear system $|kH|$ on $\mathcal{B}$ (for $k \geq 1$) isomorphically onto a curve on $F$, cut out by a hypersurface of degree $k$ in $\mathbb{P}^R$. In particular, $Y_e \cong Y$ appears as a hyperplane section of $F$, and  $X_{me} \cong X$ as a hypersurface section of degree $m$. The formula for the genus then follows directly from the adjunction formula. Also, $\deg X_{me} = m H \cdot H = me$. This establishes \textnormal{(a)}.

For part \textnormal{(b)}, note that the morphism $\phi : X_{me} \to Y_e$ corresponds, via the diagram below, to the morphism $\varphi : X \to Y$ from \textnormal{Theorem~\ref{TheoremM}} with $\Delta = 0$:
\begin{equation}\label{Sec3Coro1CD}
    \begin{tikzcd}
      X \ar[r, "{\Psi_{|_{X}}}", "{\cong}"'] \ar[d, "{\varphi}"'] & X_{me} \ar[d, "{\phi}"] \\
      Y \cong Y_1 \ar[r, "{\Psi_{|_{Y_1}}}", "{\cong}"'] & Y_e
    \end{tikzcd}
\end{equation}
Moreover,
\[
 \bigl( \Psi_{|_{Y_1}} \bigr)^{\ast} \mathcal{O}_{Y_e}(1)
   \;\cong\; \mathcal{O}_{Y_1}\bigl(H_{|_{Y_1}}\bigr)
   \;\cong\; \bigl(f_{|_{Y_1}}\bigr)^{\ast}\mathcal{O}_{Y}(E).
\]
Thus, applying part \textnormal{(a)} of \textnormal{Theorem~\ref{TheoremM}} yields the decomposition \eqref{Sec3Coro1eq}, completing the proof.
\end{proof}


If $q \in Y$ is a point, then the fiber $f^{\ast}q = f^{-1}(q)$ of $f : \mathcal{B} \to Y$ meets the section $Y_0 \sim H - f^{\ast}E$ in a unique point, denoted
\[
 q_0 := Y_0 \cap f^{\ast}q .
\]
If $X \in |mH + f^{\ast}q|$ is a smooth curve, then
\begin{equation}\label{Sec3RestrY0}
 \mathcal{O}_X\big((H - f^{\ast}E)|_X \big) \;\cong\; \mathcal{O}_X\big(Y_0|_X \big) \;\cong\; \mathcal{O}_X(q_0).
\end{equation}
Moreover, for any $z \in Y$, the morphism $\Psi$ maps the fiber $f^{\ast}z = f^{-1}(z)$ onto a line in the ruling of $F$.

\begin{remark}\label{Rem_smoothC}
As noted in \cite[p.~226]{CG99}, the linear system $|mH + f^{\ast}q|$ contains a smooth element provided there exist reduced divisors
$\tilde{E} \in |E|$ and $G \in |mE+q|$ such that $q \in \tilde{E}$ and $\tilde{E} \cap G = \varnothing$.
Since $E$ is a nonspecial divisor on $Y$, the system $|mE+q|$ is base-point free, so $q$ is not a fixed point.
When $E$ is nonspecial and $h^0\bigl(Y, \mathcal{O}_Y(E)\bigr) \geq 2$, Bertini-type arguments guarantee that one can choose $\tilde{E} \sim E$ with $q \in \tilde{E}$ and a reduced divisor $G \sim mE+q$ disjoint from $\tilde{E}$.
Hence, whenever $E$ is a nonspecial divisor on $Y$ with $h^0\bigl(Y, \mathcal{O}_Y(E)\bigr) \geq 2$, the general element of $|mH + f^{\ast}q|$ is smooth and irreducible.
\end{remark}


\begin{coro}\label{Coro2}
Let $\Delta = q$ be a point on $Y$, and let $X \subset \mathcal{B}$ be a smooth curve in the linear series $|mH + f^{\ast} q|$.
Denote by $X_{me+1} := \Psi(X) \subset F$ the image of $X$ under $\Psi$.
Let $l_q$ be the line in the ruling of $F$ corresponding to the image of $f^{\ast} q$, and let $Q := l_q \cap Y_e$.
Then:
\begin{enumerate}[label=(\alph*), leftmargin=*, font=\rmfamily]
 \item $X_{me+1}$ is a smooth curve of degree $me+1$ and genus
 \[
   g = \binom{m}{2}e + m\gamma ,
 \]
 which passes through the vertex $P$ of $F$ with multiplicity one;

 \item if $\phi : X_{me+1} \to Y_e$ is the morphism induced by projection from $P$ onto the hyperplane in $\mathbb{P}^R$ containing $Y_e$, then:
 \begin{itemize}[label=$\bullet$, leftmargin=*, font=\rmfamily]
  \item
    \begin{equation}\label{Sec3Coro2eq1}
     \phi_{\ast}\mathcal{O}_{X_{me+1}} \;\cong\;
     \mathcal{O}_{Y_e} \oplus
     \Bigl(\mathcal{O}_{Y_e}(-1) \oplus \cdots \oplus \mathcal{O}_{Y_e}\bigl(-(m-1)\bigr)\Bigr)
     \otimes \mathcal{O}_{Y_e}(-Q) ,
    \end{equation}

  \item
    \begin{equation}\label{Sec3Coro2eq2}
     \phi_{\ast}\mathcal{O}_{X_{me+1}} (P) \;\cong\;
     \mathcal{O}_{Y_e} \oplus \mathcal{O}_{Y_e}(-1) \oplus
     \Bigl(\mathcal{O}_{Y_e}(-2) \oplus \cdots \oplus \mathcal{O}_{Y_e}\bigl(-(m-1)\bigr)\Bigr)
     \otimes \mathcal{O}_{Y_e}(-Q) .
    \end{equation}
 \end{itemize}
\end{enumerate}
\end{coro}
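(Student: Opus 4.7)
The strategy is to adapt the proof of Corollary~\ref{Coro1}: reduce everything to Theorem~\ref{TheoremM} with $\Delta = q$ via a commutative diagram analogous to~\eqref{Sec3Coro1CD}. For part (a), I first compute intersections on $\mathcal{B}$ using $H^2 = e$, $H \cdot f^{\ast}D = \deg D$, $(f^{\ast}D)^2 = 0$, and $Y_0 \sim H - f^{\ast}E$, obtaining $\deg X_{me+1} = X \cdot H = me + 1$ and $X \cdot Y_0 = me - me + 1 = 1$. The genus formula follows by the adjunction formula on $\mathcal{B}$ with $K_{\mathcal{B}} \sim -2H + f^{\ast}(K_Y + E)$. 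Since $X \cdot Y_0 = 1$ and both $X$ and $Y_0$ are smooth, they meet transversally at a unique point $p_0$, which $\Psi$ sends to $P$. To show $X_{me+1}$ is smooth at $P$, I argue that $\Psi|_X : X \to X_{me+1}$ is an isomorphism: it is injective (iso on $X \setminus \{p_0\}$ by Proposition~\ref{Sec3_Prop_Morph}, collapsing only $p_0$ to $P$), and a local analysis near $p_0$, using a local model $(\lambda, t) \mapsto \lambda \phi(t)$ for $\Psi$ coming from a parametrization $\phi$ of $Y_e$, shows that its differential at $p_0$ is injective. Hence $X_{me+1} \cong X$ is smooth, passing through $P$ with multiplicity one.

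For part (b), once $\Psi|_X$ is an isomorphism, the analog of diagram~\eqref{Sec3Coro1CD} reduces the computations of $\phi_{\ast} \mathcal{O}_{X_{me+1}}$ and $\phi_{\ast} \mathcal{O}_{X_{me+1}}(P)$ to those of the corresponding pushforwards under $\varphi : X \to Y$, transported to $Y_e$ by the isomorphism $\Psi|_{Y_1}$. I verify that the hypothesis $H^1(Y, \mathcal{O}_Y(kE + q)) = 0$ for $k = 1, \ldots, m-1$ required by Theorem~\ref{TheoremM} holds, since $E$ is nonspecial and very ample, so $kE + q$ has degree $\geq 2\gamma$ for $k \geq 1$. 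Under $\Psi|_{Y_1}$, the divisor $E$ on $Y$ corresponds to a hyperplane class on $Y_e$, and the point $q \in Y$ corresponds to $Q$: indeed, $\sigma_1(q) \in f^{\ast}q$, hence $\Psi(\sigma_1(q)) \in l_q \cap Y_e = \{Q\}$.

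Applying Theorem~\ref{TheoremM}(a) with $\Delta = q$ then immediately yields~\eqref{Sec3Coro2eq1} after transporting to $Y_e$. For~\eqref{Sec3Coro2eq2}, I first check that the unique intersection point $p_0 = X \cap Y_0$ coincides with $q_0 = Y_0 \cap f^{\ast}q$: since $H|_{Y_0} \sim 0$, the restriction $X|_{Y_0} = (mH + f^{\ast}q)|_{Y_0}$ reduces to the divisor $q$ on $Y_0 \cong Y$. Then by~\eqref{Sec3RestrY0} one has $\mathcal{O}_X((H - f^{\ast}E)|_X) \cong \mathcal{O}_X(p_0)$, and $\Psi(p_0) = P$ gives $\mathcal{O}_{X_{me+1}}(P) \cong (\Psi|_X)_{\ast} \mathcal{O}_X(p_0)$. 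Now Theorem~\ref{TheoremM}(b) with $\Delta = q$ supplies the decomposition of $\varphi_{\ast} \mathcal{O}_X((H - f^{\ast}E)|_X)$, which transports to~\eqref{Sec3Coro2eq2} on $Y_e$.

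The main obstacle is the smoothness of $X_{me+1}$ at the singular vertex $P$ of the cone $F$: this does not follow formally from the smoothness of $X$ on the resolution $\mathcal{B}$ and requires the local analysis of the contraction $\Psi$ together with the transversality of $X$ and $Y_0$ at $p_0$. Once this is handled, the rest of the proof is a mechanical application of Theorem~\ref{TheoremM} transported along the isomorphism $\Psi|_{Y_1} : Y_1 \to Y_e$.
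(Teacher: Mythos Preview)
Your approach is essentially the paper's own: reduce to Theorem~\ref{TheoremM} with $\Delta=q$ via a diagram analogous to~\eqref{Sec3Coro1CD}, and invoke~\eqref{Sec3RestrY0} for the second decomposition in~(b). You supply in full the intersection computations and the local argument for smoothness at $P$ that the paper delegates to Corollary~\ref{Coro1} and to \cite[Prop.~3 and~4]{CIK24b}.

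One small correction: your justification of $H^1(Y,\mathcal{O}_Y(kE+q))=0$ via ``$\deg(kE+q)\ge 2\gamma$'' is not valid for $k=1$, since very ample and nonspecial do not force $e\ge 2\gamma-1$. Argue instead that $E$ effective and nonspecial gives $h^0(K_Y-E)=0$, hence $h^0(K_Y-kE-q)=0$ for every $k\ge 1$. Also, the step ``$H|_{Y_0}\sim 0$ so $X|_{Y_0}$ reduces to $q_0$'' only gives $p_0\sim q_0$ on $Y_0$; to get $p_0=q_0$ you need $|q_0|=\{q_0\}$ (automatic for $\gamma\ge 1$), but in fact this identification is unnecessary: $\mathcal{O}_X\bigl((H-f^{\ast}E)|_X\bigr)=\mathcal{O}_X(Y_0|_X)=\mathcal{O}_X(p_0)$ already, and $\Psi(p_0)=P$ is all you use.
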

\begin{proof}
Part~\textnormal{(a)} follows by arguments analogous to those used in Corollary~\ref{Coro1}.
For the statement about the multiplicity at $P$, see also \cite[Prop.~3 and 4]{CIK24b}.

For part~\textnormal{(b)}, note that the projection from $P$ to the hyperplane containing $Y_e$ induces the morphism $\phi : X_{me+1} \to Y_e$.
This corresponds to the morphism $\varphi : X \to Y$ from \textnormal{Theorem~\ref{TheoremM}}, via a commutative diagram analogous to \eqref{Sec3Coro1CD}.
The claimed decompositions \eqref{Sec3Coro2eq1}–\eqref{Sec3Coro2eq2} then follow from \eqref{Sec3RestrY0} and \textnormal{Theorem~\ref{TheoremM}}.
\end{proof}

The previous corollaries allow us to derive the following result.

\begin{prop}\label{Sec3PropC_on_cones}
Let $Y_e \subset \mathbb{P}^{R-1}$ be a smooth curve of genus $\gamma \geq 2$ and degree $e \geq 2\gamma - 1$, with $R = e - \gamma + 1$. Let $F \subset \mathbb{P}^{R}$ be the non-degenerate cone over $Y_e$ with vertex $P$. Fix an integer $m \geq 2$, and let $X_d \subset F$ be a smooth curve of degree $d$ and genus $g$. Denote by
\[
 \phi : X_d \longrightarrow Y_e
\]
the morphism induced by projection from $P$ onto the hyperplane containing $Y_e$. Then:
\begin{enumerate}[label=(\alph*), leftmargin=*, font=\rmfamily]
 \item If $d = me$ and $g = \binom{m}{2}e + m\gamma + 1 - m$, then $X_d$ is cut out on $F$ by a hypersurface of degree $m$. Moreover, the direct image $\phi_{\ast}\mathcal{O}_{X_d}$ decomposes as in~\eqref{Sec3Coro1eq}.

 \item If $d = me+1$ and $g = \binom{m}{2}e + m\gamma$, then $X_d$ is algebraically equivalent to the intersection of $F$ with a hypersurface of degree $m$, together with a line from the ruling of $F$. The tangent line $\tau_P$ to $X_d$ at $P$ belongs to the ruling of $F$ and meets $X_d$ at $P$ with multiplicity two. Setting $Q := \tau_P \cap Y_e$, the pushforwards $\phi_{\ast}\mathcal{O}_{X_d}$ and $\phi_{\ast}\mathcal{O}_{X_d}(P)$ satisfy~\eqref{Sec3Coro2eq1} and~\eqref{Sec3Coro2eq2}, respectively.
\end{enumerate}
\end{prop}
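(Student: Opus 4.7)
The plan is to lift $X_d \subset F$ to its strict transform $\tilde X \subset \mathcal{B}$ via $\Psi$, identify the divisor class of $\tilde X$ from the degree and genus, and invoke Corollaries~\ref{Coro1} and~\ref{Coro2}. Since $X_d$ is smooth, $\tilde X$ is smooth, irreducible, and $\Psi|_{\tilde X} : \tilde X \to X_d$ is an isomorphism.

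First, write $\tilde X \sim aH + f^{\ast}D$ in $\operatorname{Pic}(\mathcal{B}) = \mathbb{Z}H \oplus f^{\ast}\operatorname{Pic}(Y)$ with $a \geq 1$ and $\delta := \deg D \geq 0$. Using $K_{\mathcal{B}} = -2H + f^{\ast}(E + K_Y)$, $H^2 = e$, and $(f^{\ast}D)^2 = 0$, the degree and adjunction relations give
\[
 d = ae + \delta, \qquad 2g - 2 = a(a - 1)\,e + 2a(\gamma - 1) + 2\delta(a - 1).
\]
Plugging in the prescribed $(d, g)$ and eliminating $\delta = d - ae$ yields a single equation in $a$, and the hypotheses $\gamma \geq 2$ and $e \geq 2\gamma - 1$ force the unique non-negative integer solutions $(a, \delta) = (m, 0)$ in part~(a) and $(a, \delta) = (m, 1)$ in part~(b). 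Isolating these solutions is the main technical obstacle: all spurious numerical branches must be excluded using the stated inequalities.

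For part~(a), $\delta = 0$ means $\tilde X \cap Y_0 = \varnothing$, so $X_d$ avoids $P$. A direct cohomology count yields $h^0(\mathcal{O}_F(m)) - h^0(\mathcal{O}_{X_d}(m)) = 1$ (using Riemann--Roch and non-speciality of $\mathcal{O}_{X_d}(m)$ on $X_d$), whence $h^0(\mathcal{I}_{X_d/F}(m)) \geq 1$: some degree-$m$ hypersurface $H_m$ contains $X_d$, and the equality $\deg(F \cap H_m) = me = \deg X_d$ forces $F \cap H_m = X_d$ as $1$-cycles. Pulling back gives $\Psi^{\ast} H_m = \tilde X + r\,Y_0$; matching classes with $\tilde X \sim mH + f^{\ast}D$ and $\delta = 0$ forces $r = 0$ and $D \sim 0$, so $\tilde X \in |mH|$, and Corollary~\ref{Coro1}(b) supplies~\eqref{Sec3Coro1eq}.

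For part~(b), $\delta = 1$ gives $\tilde X \cap Y_0 = \{q_0\}$; setting $q := f(q_0)$, the restriction $\tilde X|_{Y_0} \sim q_0$ on $Y_0 \cong Y$ together with the injectivity of Abel--Jacobi (valid for $\gamma \geq 2$) pins down $\tilde X \sim mH + f^{\ast}q$ linearly. Corollary~\ref{Coro2}(b) now supplies~\eqref{Sec3Coro2eq1}--\eqref{Sec3Coro2eq2}, while algebraic equivalence of $X_d$ with $mH_F + l$ for some ruling line $l$ follows from $\Psi_{\ast}\tilde X$. The tangent line $\tau_P$ is identified by a local computation: in coordinates $(x, y)$ near $q_0$ with $Y_0 = \{y = 0\}$ and $f^{\ast}q = \{x = 0\}$, write $\Psi(x, y) = y \cdot v(x)$ where $v(x)$ parametrizes the generators of the cone. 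The Taylor expansion $\Psi(\tilde X(y)) = y\, v(0) + x'(0)\, v'(0)\, y^2 + O(y^3)$ shows that $\tau_P$ has direction $v(0) = v_q$, so $\tau_P = l_q$ is in the ruling; the contact order at $P$ is exactly two provided $x'(0) \neq 0$, i.e., $\tilde X$ meets $f^{\ast}q$ transversely at $q_0$, a condition generic in $|mH + f^{\ast}q|$.
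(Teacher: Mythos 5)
Your overall strategy for part (a) matches the paper's: pass to the strict transform $\tilde X$ on the blow-up, use the degree and adjunction to get a quadratic in $a$ whose spurious root $a=m+1+\tfrac{2(\gamma-1)}{e}$ is killed by $\gamma\ge 2$ and $e\ge 2\gamma-1$, and conclude $a=m$, $\delta=0$. Where you diverge is in upgrading the numerical class to a linear equivalence: you run a cohomological count $h^0(\mathcal{O}_F(m))-h^0(\mathcal{O}_{X_d}(m))=1$ to produce a degree-$m$ divisor cutting out $X_d$ and then match pullback classes. This works (the count is correct), but it silently uses that $F$ is normal, so that $h^0(\mathcal{O}_F(m))=h^0(S,mH)$, and that sections of $\mathcal{O}_F(m)$ lift to hypersurfaces of $\mathbb{P}^R$. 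The paper's route is more economical: since $\tilde X\cdot Y_0=0$ the curves are disjoint, so restricting $\tilde X\sim mY_0+f^{\ast}\beta$ to $Y_0\cong Y_e$ gives $\beta\sim m\xi$ directly, with no cohomology needed. You should also record why $a\ge 1$ and $\delta\ge 0$ (irreducibility of $\tilde X$ and $\tilde X\neq Y_0$, $\tilde X$ not a fiber); in part (b) the constraint $\delta\ge 0$ is genuinely needed to exclude the second root $a=m+1+\tfrac{2\gamma}{e}$, which \emph{is} an integer when $e=2\gamma$, so "the inequalities force $a=m$" hides a case you must actually treat.

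For part (b) the paper simply cites \cite[Proposition~5]{CIK24b}, so you are doing more work than the text; your derivation of $\tilde X\sim mH+f^{\ast}q$ by restricting to $Y_0$ is correct (the appeal to Abel--Jacobi is unnecessary: $j^{\ast}D\sim q_0$ on $Y_0\cong Y_e$ already gives $D\sim q$), and the decompositions \eqref{Sec3Coro2eq1}--\eqref{Sec3Coro2eq2} then follow from Corollary~\ref{Coro2} exactly as you say. The genuine gap is the multiplicity-two claim for $\tau_P$. Your local computation correctly shows that the contact order of $\tau_P=l_q$ with $X_d$ at $P$ equals $1+i_{q_0}(\tilde X,f^{\ast}q)$, hence is two \emph{if and only if} $\tilde X$ meets the fibre $f^{\ast}q$ transversally at $q_0$; but the proposition asserts multiplicity two for \emph{every} smooth $X_d$ of the given degree and genus, and you only assert transversality as a generic condition in $|mH+f^{\ast}q|$. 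Nothing in your argument rules out a smooth irreducible member of $|mH+f^{\ast}q|$ with vertical tangent at $q_0$ (transversality to $Y_0$, which is forced by $\tilde X\cdot Y_0=1$, is compatible with tangency to the fibre), and for such a curve your own expansion gives contact order at least three. So you must either prove that transversality at $q_0$ is automatic for smooth members, or explicitly flag that this clause of the statement requires it; as written the step does not close.
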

\begin{proof}
Let $S$ be the blow-up of $F$ at its vertex $P$. Then $S$ can be identified with the decomposable ruled surface
\[
 \mathbb{P} \bigl( \mathcal{O}_{Y_e} \oplus \mathcal{O}_{Y_e}(1) \bigr)
\]
over $Y_e$. Let $f : S \to Y_e$ be the natural projection, and let $H$ denote the tautological divisor on $S$, so that
\[
 f_{\ast}\mathcal{O}_S(H) \;\cong\; \mathcal{O}_{Y_e} \oplus \mathcal{O}_{Y_e}(1).
\]
Since $F$ is the image of $S$ under the morphism $\Psi$ associated with the linear system $|H|$, it suffices to show that:
\begin{itemize}[leftmargin=*]
 \item in case \textnormal{(a)}, the curve $X_d$ is the image of a smooth curve $X \sim mH$ on $S$;
 \item in case \textnormal{(b)}, the curve $X_d$ is the image of a smooth curve $X \sim mH + f^{\ast}Q$ on $S$.
\end{itemize}

We treat case \textnormal{(a)}; case \textnormal{(b)} was established in \cite[Proposition~5]{CIK24b}.

Assume $d = me$ and $g = \binom{m}{2}e + m\gamma + 1 - m$ as in \textnormal{(a)}. Let $X \subset S$ be the strict transform of $X_d$, and let $Y_0 := \Psi^{-1}(P)$ denote the section of minimal self-intersection on $S$. Then $Y_0^2 = -e$, and
\[
 H \;\sim\; Y_0 + f^{\ast}\xi ,
\]
where $\xi$ is the hyperplane section divisor on $Y_e$, i.e. $\mathcal{O}_{Y_e}(\xi) \cong \mathcal{O}_{Y_e}(1)$.

Since $S$ is generated by $Y_0$ and pullbacks of divisors from $Y_e$, we may write
\[
 X \;\sim\; aY_0 + f^{\ast}\beta
\]
for some divisor $\beta$ on $Y_e$. From the degree condition
\[
 H \cdot X = \deg X_d = me ,
\]
we obtain $\deg \beta = me$.

Applying the adjunction formula gives
\[
\bigl(-2Y_0 + f^{\ast}\omega_{Y_e} - f^{\ast}\xi + aY_0 + f^{\ast}\beta \bigr)\cdot \bigl(aY_0 + f^{\ast}\beta \bigr) = 2g-2 = m(m-1)e + 2m(\gamma-1),
\]
which simplifies to
\begin{equation}\label{Prop6AdjForm}
 -a(a-2)e + me(a-2) + a(2\gamma - 2 - e + me) \;=\; m\bigl((m-1)e + 2(\gamma - 1)\bigr).
\end{equation}

Solving \eqref{Prop6AdjForm} for $a$, we find the roots
\[
 a = m \quad \text{or} \quad a = m+1 + \tfrac{2(\gamma-1)}{e}.
\]
Since $a$ must be integer and $e \geq 2\gamma-1$ by hypothesis, the second solution is excluded. Thus $a = m$, and hence
\[
 X \;\sim\; mY_0 + f^{\ast}\beta .
\]

Next, observe
\[
 X \cdot Y_0 = (mY_0 + f^{\ast}\beta)\cdot Y_0 = -me + \deg\beta = 0,
\]
so the curves $X$ and $Y_0$ are disjoint. Consequently,
\[
 (f^{\ast}\beta)_{|Y_0} \;\sim\; -m(Y_0|_{Y_0}).
\]
Identifying $Y_0 \cong Y_e$ via the isomorphism $j := {f}_{|_{Y_0}} : Y_0 \to Y_e$, we have
\[
 Y_0|_{Y_0} \;\sim\; -j^{\ast}\xi \quad \text{and} \quad (f^{\ast}\beta)_{|Y_0} \;\sim\; j^{\ast}\beta .
\]
It follows that $j^{\ast}\beta \sim m j^{\ast}\xi$, hence $\beta \sim m\xi$ on $Y_e$. Therefore,
\[
 X \;\sim\; m (Y_0 + f^{\ast}\xi) \;\sim\; mH .
\]

The claim in \textnormal{(a)} now follows from \textnormal{Corollary~\ref{Coro1}}.
\end{proof}

\begin{coro}\label{Sec3PropC_on_conesCoro}
In the setting of \textnormal{Proposition \ref{Sec3PropC_on_cones}}, let $X_d \subset F$ be a smooth curve of degree $d$ such that the morphism $\phi : X_d \to Y_e$ is an $m\!:\!1$ cover. Then:
\begin{enumerate}[label=(\alph*), leftmargin=*, font=\rmfamily]
 \item If $X_d$ does not pass through $P$, then $d = me$ and $g = \binom{m}{2}e + m\gamma + 1 - m$. In particular, case \textnormal{(a)} of \textnormal{Proposition \ref{Sec3PropC_on_cones}} applies.

 \item If $X_d$ passes through $P$, then $d = me+1$ and $g = \binom{m}{2}e + m\gamma$. In particular, case \textnormal{(b)} of \textnormal{Proposition \ref{Sec3PropC_on_cones}} applies.
\end{enumerate}
\end{coro}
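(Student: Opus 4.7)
The plan is to work on the blow-up $S=\mathbb{P}(\mathcal{O}_{Y_e}\oplus\mathcal{O}_{Y_e}(1))$ of $F$ at $P$, with projection $f\colon S\to Y_e$ and minimal section $Y_0$ satisfying $Y_0^2=-e$ and $H\sim Y_0+f^\ast\xi$, exactly as in the proof of Proposition~\ref{Sec3PropC_on_cones}. Let $X\subset S$ be the strict transform of $X_d$; since $X_d$ is smooth, $\Psi|_X\colon X\to X_d$ is an isomorphism, so $X$ is smooth of the same genus. Writing $X\sim aY_0+f^\ast\beta$ in $\Pic(S)$, and noting that under the identification $f|_{Y_0}\colon Y_0\cong Y_e$ the morphism $\phi$ coincides with $f|_X$ (because the rulings of $F$ are the images of the fibers of $f$), the hypothesis that $\phi$ is $m{:}1$ translates into $a=X\cdot f^\ast(\mathrm{pt})=m$. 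Therefore $X\sim mY_0+f^\ast\beta$ and $d=H\cdot X=\deg\beta$.

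For part~(a), the condition $P\notin X_d$ forces $X\cap Y_0=\varnothing$, so $X\cdot Y_0=-me+\deg\beta=0$, giving $\deg\beta=me$ and $d=me$. The second half of the proof of Proposition~\ref{Sec3PropC_on_cones}(a), which uses only $X\cdot Y_0=0$ and not the genus hypothesis, then shows $\beta\sim m\xi$, hence $X\sim mH$; the genus formula is a direct consequence of Corollary~\ref{Coro1}(a).

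For part~(b), the crucial point is to establish $X\cdot Y_0=1$. Since $X_d\subset F$ is smooth at $P$, it has a unique tangent line at $P$, which must lie in the tangent cone of $F$ at $P$; as $F$ is a cone with vertex $P$, this tangent cone is $F$ itself, so the tangent direction of $X_d$ at $P$ is one of the rulings $l_Q$ for a unique $Q\in Y_e$. The strict transform of a smooth curve through $P$ with tangent $l_Q$ then meets $Y_0$ transversally at the single point $q_0=f^\ast Q\cap Y_0$, giving $X\cdot Y_0=1$. Hence $\deg\beta=me+1$ and $d=me+1$, and the genus $g=\binom{m}{2}e+m\gamma$ follows from the adjunction formula applied to $X\sim mY_0+f^\ast\beta$ on $S$, by the same calculation as in the proof of Proposition~\ref{Sec3PropC_on_cones} but now with $a=m$ and $\deg\beta=me+1$.

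The main obstacle I anticipate is the transversality claim $X\cdot Y_0=1$ in case~(b). This is the familiar fact that the strict transform of a smooth curve through a blown-up point meets the exceptional divisor transversally at a single point, but some care is needed because $F$ is singular at~$P$. The cleanest justification is to view $\Psi\colon S\to F$ as the restriction to $F$ of the blow-up of the ambient $\mathbb{P}^R$ at $P$, which reduces the assertion to the usual smooth-ambient case; alternatively it can be verified directly in local analytic coordinates on $S$ in which $Y_0$ is a coordinate axis, since a higher-order tangency of $X$ with $Y_0$ would force the image of $X$ in $F$ to have a singular or non-reduced branch at $P$.
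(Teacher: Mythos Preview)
Your proof is correct and follows essentially the same route as the paper: pass to the blow-up $S$, write $X\sim aY_0+f^{\ast}\beta$, read off $a=m$ from the fiber degree, and then determine $\deg\beta$ from the intersection $X\cdot Y_0$, which equals $0$ or $1$ according as $P\notin X_d$ or $P\in X_d$. The paper computes the genus in both cases directly by adjunction on $S$, whereas in~(a) you instead establish $\beta\sim m\xi$ and quote Corollary~\ref{Coro1}; both are fine. Your discussion of why $X\cdot Y_0=1$ in case~(b) is in fact more careful than the paper's, which simply asserts it from the fact that $\Psi$ contracts $Y_0$ to $P$; your reduction to the blow-up of the ambient $\mathbb{P}^R$ is the standard clean justification.
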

\begin{proof}
Let $S$, $f$, $X$, $H$, $Y_0$, $\xi$, and $\Psi$ be as in the proof of \textnormal{Proposition \ref{Sec3PropC_on_cones}}. Write
\[
 X \sim a Y_0 + f^{\ast}\beta ,
\]
for some integer $a$ and some divisor $\beta$ on $Y_e$. Since $\Psi$ maps fibers $f^{\ast}z$ (for $z \in Y_e$) to lines in the ruling of $F$, and $\phi$ is the projection from $P$ onto the hyperplane containing $Y_e$, the assumption that $\phi$ is an $m\!:\!1$ cover implies
\[
 m = X \cdot f^{\ast}z = a .
\]

\smallskip
\noindent\textbf{(a)} If $X_d$ does not contain $P$, then
\[
 Y_0 \cdot (mY_0 + f^{\ast}\beta) = 0 ,
\]
hence $Y_0 \cdot f^{\ast}\beta = me$. By the adjunction formula we obtain
\[
 2g-2 = \bigl(-2Y_0 + f^{\ast}\omega_{Y_e} - f^{\ast}\xi + mY_0 + f^{\ast}\beta \bigr)\cdot (mY_0 + f^{\ast}\beta) = m(m-1)e + 2m(\gamma-1).
\]
Thus $g = \binom{m}{2}e + m\gamma + 1 - m$. The degree $d$ of $X_d$ is
\[
 d = X \cdot H = (mY_0 + f^{\ast}\beta)\cdot (Y_0 + f^{\ast}\xi) = me.
\]

\smallskip
\noindent\textbf{(b)} If $X_d$ contains $P$, then
\[
 Y_0 \cdot (mY_0 + f^{\ast}\beta) = 1 ,
\]
since $\Psi$ contracts $Y_0$ to $P$. Hence $Y_0 \cdot f^{\ast}\beta = me+1$. By adjunction,
\[
 2g-2 = m(m-1)e + 2m\gamma - 2 ,
\]
so $g = \binom{m}{2}e + m\gamma$. The degree $d$ of $X_d$ is
\[
 d = (mY_0 + f^{\ast}\beta)\cdot (Y_0 + f^{\ast}\xi) = me+1.
\]
\end{proof}

Finally, we include one more statement, concerning plane curves, which is derived easily using similar arguments as above.

\begin{prop}\label{Sec3_Prop_PlaneC}
Let $X_m \subset \mathbb{P}^2$ be a smooth plane curve of degree $m \geq 2$, let $L \cong \mathbb{P}^1$ be a line in the plane, and let $P \in \mathbb{P}^2 \setminus L$ be a point not lying on $L$. Consider the morphism
\[
 \phi : X_m \longrightarrow L
\]
induced by projection from $P$ onto $L$. Then:

\begin{enumerate}[label=(\alph*), leftmargin=*, font=\rmfamily]
 \item If $P \notin X_m$, then $\phi$ is an $m\!:\!1$ covering, and
 \begin{equation*}\label{Prop_PlaneC_a}
  \phi_{\ast}\mathcal{O}_{X_m} \;\cong\;
  \mathcal{O}_{\mathbb{P}^1} \oplus \mathcal{O}_{\mathbb{P}^1}(-1) \oplus \cdots \oplus \mathcal{O}_{\mathbb{P}^1}\bigl(-(m-1)\bigr),
 \end{equation*}
 where we identify $\mathcal{O}_L$ with $\mathcal{O}_{\mathbb{P}^1}$.

 \item If $P \in X_m$, $m \geq 3$, and the tangent line to $X_m$ at $P$ meets $X_m$ with multiplicity two, then $\phi$ is an $(m-1)\!:\!1$ covering, and:
 \begin{itemize}[label=$\bullet$, leftmargin=*, font=\rmfamily]
  \item
    \begin{equation*}\label{Prop_PlaneC_b1}
     \phi_{\ast}\mathcal{O}_{X_m} \;\cong\;
     \mathcal{O}_{\mathbb{P}^1} \oplus \mathcal{O}_{\mathbb{P}^1}(-2) \oplus \mathcal{O}_{\mathbb{P}^1}(-3) \oplus \cdots \oplus \mathcal{O}_{\mathbb{P}^1}\bigl(-(m-1)\bigr),
    \end{equation*}

  \item
    \begin{equation*}\label{Prop_PlaneC_b2}
     \phi_{\ast}\mathcal{O}_{X_m}(P) \;\cong\;
     \mathcal{O}_{\mathbb{P}^1} \oplus \mathcal{O}_{\mathbb{P}^1}(-1) \oplus \mathcal{O}_{\mathbb{P}^1}(-3) \oplus \cdots \oplus \mathcal{O}_{\mathbb{P}^1}\bigl(-(m-1)\bigr).
    \end{equation*}
 \end{itemize}
\end{enumerate}
\end{prop}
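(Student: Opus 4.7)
The plan is to realize the projection $\phi$ as the restriction of a decomposable $\mathbb{P}^1$-bundle projection, so that Theorem~\ref{TheoremM} applies directly. Let $\pi : S \to \mathbb{P}^2$ denote the blow-up at $P$. Then $S$ is identified with the Hirzebruch surface $\mathbb{F}_1 = \mathbb{P}(\mathcal{O}_{\mathbb{P}^1} \oplus \mathcal{O}_{\mathbb{P}^1}(1))$ over $L \cong \mathbb{P}^1$, the exceptional divisor of $\pi$ is the negative section $Y_0$ with $Y_0^2 = -1$, the bundle projection $f : S \to L$ extends the linear projection from $P$, and the pullback by $\pi$ of a line in $\mathbb{P}^2$ has class $H$. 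Taking $Y = \mathbb{P}^1$ and $E$ equal to a single point on $\mathbb{P}^1$ places us exactly in the setting of Theorem~\ref{TheoremM}, and the vanishing hypothesis $H^1(\mathbb{P}^1, \mathcal{O}_{\mathbb{P}^1}(k)) = 0$ for $k \geq 0$ is automatic.

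For part~(a), since $P \notin X_m$, the total transform $X := \pi^{-1}(X_m)$ is a smooth curve on $S$, isomorphic to $X_m$ via $\pi$, and from $\pi^*(\text{line}) \sim H$ we obtain $X \in |mH|$. A direct application of Theorem~\ref{TheoremM}(a) with $\Delta = 0$, followed by the identification $\mathcal{O}_{\mathbb{P}^1}(-kE) \cong \mathcal{O}_{\mathbb{P}^1}(-k)$, yields the claimed decomposition of $\phi_* \mathcal{O}_{X_m}$.

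For part~(b), let $X$ denote the strict transform of $X_m$. Since $X_m$ is smooth at $P$, so is $X$; the equality $\pi^* X_m = X + Y_0$ together with $Y_0 \sim H - f^*(\mathrm{pt})$ gives $X \sim (m-1)H + f^* q$ for some point $q \in L$. On $\mathbb{P}^1$ the specific choice of $q$ is immaterial up to linear equivalence, so Theorem~\ref{TheoremM}(a), applied with $\Delta$ of degree one and the theorem's degree parameter equal to $m-1$, produces the decomposition of $\phi_* \mathcal{O}_{X_m}$. For the twisted pushforward, the point $P' := X \cap Y_0$ is carried to $P$ by $\pi|_X : X \xrightarrow{\sim} X_m$, so $\mathcal{O}_{X_m}(P) \cong \mathcal{O}_X(P') \cong \mathcal{O}_X(Y_0|_X) \cong \mathcal{O}_X((H - f^* E)|_X)$; Theorem~\ref{TheoremM}(b) with the same parameters then yields the second splitting.

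The main point requiring care is the geometric dictionary at $P$: confirming that the strict transform meets $Y_0$ at a single reduced point, that this point corresponds under $f|_{Y_0} : Y_0 \xrightarrow{\sim} L$ to the image of the tangent direction at $P$, and that the tangent-multiplicity hypothesis guarantees the correct local behavior of the cover at the fiber over this point. Once this bookkeeping is in place, the two decompositions in (b) are immediate consequences of Theorem~\ref{TheoremM}.
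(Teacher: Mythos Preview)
Your proposal is correct and follows essentially the same approach as the paper: blow up $\mathbb{P}^2$ at $P$ to obtain $S \cong \mathbb{F}_1 = \mathbb{P}(\mathcal{O}_L \oplus \mathcal{O}_L(1))$, identify the strict transform of $X_m$ as lying in $|mH|$ (case~(a)) or $|(m-1)H + f^{\ast}q|$ (case~(b)), and read off the decompositions from Theorem~\ref{TheoremM}. The paper phrases the last step as an appeal to Corollaries~\ref{Coro1} and~\ref{Coro2} rather than to Theorem~\ref{TheoremM} directly, but since those corollaries are themselves immediate applications of the theorem in the curve case, the difference is purely cosmetic.
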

\begin{proof}
Let $S$ be the blow-up of $\mathbb{P}^2$ at the point $P$. Then $S \cong \mathbb{F}_1 = \mathbb{P} \bigl( \mathcal{O}_{\mathbb{P}^1} \oplus \mathcal{O}_{\mathbb{P}^1}(1) \bigr)$, the first Hirzebruch surface, which can be viewed as the projectivization $S \cong \mathbb{P} \bigl( \mathcal{O}_{L} \oplus \mathcal{O}_{L}(1) \bigr)$ with projection map $f : S \to L$. Let $H$ denote the tautological divisor on $S$, and let $X \subset S$ be the strict (proper) transform of $X_m$. Then:
\begin{itemize}[leftmargin=*]
 \item in case \textnormal{(a)} we have $X \sim mH$,
 \item in case \textnormal{(b)} we have $X \sim (m-1)H + f^{\ast}Q$, where $Q$ is the point where the tangent line to $X_m$ at $P$ meets $L$.
\end{itemize}
The assertions now follow from \textnormal{Corollary~\ref{Coro1}} and \textnormal{Corollary~\ref{Coro2}}, respectively.
\end{proof}

\begin{remark}
In all of the above cases, the decomposition of the Tschirnhausen module is of \emph{Veronese type}, namely
\[
 \mathcal{E}^{\vee} \;\cong\; \bigl(\mathcal{O}_Y \oplus L \oplus \cdots \oplus L^{m-2}\bigr) \otimes M
\]
for some line bundles $L$ and $M$ on $Y$.
It would be interesting to find examples of such decompositions where $X$ is not an $m$-section of a decomposable $\mathbb{P}^1$-bundle over $Y$.
\end{remark}

\bigskip

\bibliographystyle{IEEEtran}

\end{document}